\titleformat{\section}[runin]{\bfseries}{\thesection.}{1.0ex}{}[]
\def\cprime{$'$}
\newtheorem{thm}{Theorem}[section]
\newtheorem{lem}[thm]{Lemma}
\theoremstyle{definition}
\newtheorem{rem}[thm]{Remark}
\newtheorem{exa}[thm]{Example}
\newtheorem*{exa*}{Example}
\numberwithin{equation}{section}
\begin{document}


\baselineskip=17pt


\title{On measurability of Banach indicatrix}

\author{
Nikita Evseev\\
Novosibirsk State University \\
E-mail: nikita@phys.nsu.ru
}

\date{}

\maketitle
	

\renewcommand{\thefootnote}{}

\footnote{2010 \emph{Mathematics Subject Classification}: Primary 28A20; Secondary 28A05.}

\footnote{\emph{Key words and phrases}: Banach indicatrix, doubling metric space.}

\renewcommand{\thefootnote}{\arabic{footnote}}
\setcounter{footnote}{0}

\begin{abstract}
We prove measurability of the multiplicity function for a measurable mapping
of metric measure spaces.
\end{abstract}


\section{Introduction.}
Given two metric measure spaces $X$, $Y$.
Let $f:X\to Y$ be a measurable mapping and $A\subset X$.
The Banach indicatrix (multiplicity function) is defined as 
$$
N(y,f,A) = \#\{x\in A \mid f(x) = y\},
$$
i.e. the number of elements of $f^{-1}(y)$ in $A$ (possible $\infty$).
In case $A=X$ note $N(y,f,X) = N(y,f)$.
The question under our consideration is following: \textit{is the function $N(y,f,A)$ measurable?}

Let us briefly discuss some results and examples.
The measurability of the multiplicity function for a continuous function $f:[a,b]\to\mathbb R$
was proved by Banach in \cite[Th\'{e}or\`{e}m 1.1]{B}.
Whereas \cite[Th\'{e}or\`{e}m 1.2]{B} states that $\int_a^bN(y,f)\, dy$ is equal
to the total variation $TV(f, [a,b])$.
Together  Th\'{e}or\`{e}ms 1.1 and 1.2 are named the Banach indicatrix theorem
(see \cite[p. 225--227]{N}, \cite[p. 66--72]{L}, \cite[177-178]{BC}).
There are further generalizations of this result, see for example \cite{TS, WS, RL} and the bibliography therein.

The Banach indicatrix play a role in the change of variables formula 
$$
\int_A(u\circ f)|J(x,f)|\, dx = \int_{\mathbb R^n}u(y)N(y,f,A)\, dy.
$$
In \cite{H} the formula was obtained under minimal assumptions: the a.e. existence of
approximative partial derivatives. 
In particular, the measurability of $N(y,f,A)$ was proved.

In \cite[IV.1.2]{RR} the multiplicity function of a continuous transform 
was studied in detail. 
See also \cite[p. 272]{GR} for further investigation. 
The treatment in the setting of metric spaces is given in \cite[2.10.10--15]{F}.

This note aims to show the measurability of the Banach indicatrix for a measurable mapping
(Theorem \ref{theorem:indicatrix_measurability}).
The proof of Lemma \ref{lemma:lemma1} is based upon ideas of the original proof of \cite[Th\'{e}or\`{e}m 1]{B}. While Lemma \ref{lemma:lemma2} is from authors's joint work with Professor S. K. Vodopyanov. 
 
\section{Assumptions and result.}
Let $(X,d_X,\mu_X)$ is a complete, separable metric space with a measure.
Additionally $X$ is supposed to be geometrically doubling: there is a constant $\lambda\in\mathbb N$ such that 
every ball $B(x,r)=\{z\in X \mid d_X(x,z)<r\}$ can be covered by at most $\lambda$
balls $B(x,r/2)$ of half radius.
Measure $\mu_X$ is a Borel regular measure such that each ball has finite measure. 
Assume $(Y,d_Y,\mu_Y)$ is a separable metric measurable space.

The mapping $f:X\to Y$ is a $\mu_X$-measurable if and only if $f$ is defined $\mu_X$-almost everywhere on X
and $f^{-1}(E)$ is $\mu_X$-measurable whenever $E$ is open subset of $Y$ \cite[2.3.2]{F}.

\begin{thm}\label{theorem:indicatrix_measurability}
Let $f:X\to Y$ be a $\mu_X$-measurable mapping, and $A\subset X$ be a Borel set.
Then $f$ can be redefined on a set of $\mu_X$-measure zero in such a way that 
the Banach indicatrix $N(y,f,A)$ is a $\mu_Y$-measurable function.
\end{thm}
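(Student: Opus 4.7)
The plan is to approximate $N(y,f,A)$ from below by the counts $\phi_n(y)=\sum_k\chi_{f(B_{n,k})}(y)$ associated with countable Borel partitions $\{B_{n,k}\}_k$ of $A$ whose cell diameters vanish as $n\to\infty$, in the spirit of Banach's original argument. The task then splits into two parts: showing $\sup_n\phi_n=N(\cdot,f,A)$ pointwise, and showing each $\phi_n$ is $\mu_Y$-measurable.

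First, by a Lusin-type theorem (available since $\mu_X$ is a Borel regular measure on a complete separable metric space with balls of finite measure, hence a Radon measure on a Polish space), I would choose an increasing sequence of compact sets $K_i\subset X$ such that $f|_{K_i}$ is continuous and $\mu_X\bigl(X\setminus\bigcup_i K_i\bigr)=0$. Pick $y_0\in Y$ and set $f\equiv y_0$ on the null set $N_0:=X\setminus\bigcup_i K_i$. Since
\[
N(y,f,A)=\#(A\cap N_0)\,\chi_{\{y_0\}}(y)+\sup_{i}N(y,f,A\cap K_i),
\]
and the first summand is Borel, it suffices to prove $\mu_Y$-measurability of $N(\cdot,f,A\cap K_i)$ for each $i$.

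Next, I would use geometric doubling and separability of $X$ to produce, for every $n$, a countable Borel partition $\{Q_{n,k}\}_k$ of $X$ with $\mathrm{diam}(Q_{n,k})\le 2^{-n}$; the natural candidate, which I expect one of the preparatory lemmas to codify, is the Voronoi-type partition associated with a maximal $2^{-n-1}$-separated net, countability of which is forced by doubling. Setting $B_{n,k}^{(i)}:=A\cap K_i\cap Q_{n,k}$ and $\phi_n^{(i)}(y):=\sum_k\chi_{f(B_{n,k}^{(i)})}(y)$, a Banach-style dissection shows $\phi_n^{(i)}(y)\le N(y,f,A\cap K_i)$ and $\phi_n^{(i)}(y)\to N(y,f,A\cap K_i)$ as $n\to\infty$: once $2^{-n}$ is smaller than the minimum pairwise distance among the preimages of $y$ in $A\cap K_i$, those preimages occupy distinct cells and no cell contains more than one. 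Hence $N(\cdot,f,A\cap K_i)=\sup_n\phi_n^{(i)}$, and it will be measurable as soon as each $\phi_n^{(i)}$ is.

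The main obstacle is the measurability of each $\phi_n^{(i)}$, since a priori the continuous image $f(B_{n,k}^{(i)})$ of a Borel set is merely analytic. To bypass this I would perform a second null-set modification of $f$. By inner regularity of $\mu_X$ on the compact $K_i$, write $B_{n,k}^{(i)}=C_{n,k,i}\sqcup M_{n,k,i}$ with $C_{n,k,i}$ a countable union of compact sets and $\mu_X(M_{n,k,i})=0$, and redefine $f\equiv y_0$ on the $\mu_X$-null set $\bigcup_{i,n,k}M_{n,k,i}$. After this modification, $f(C_{n,k,i})$ is an $F_\sigma$ subset of $Y$ (the continuous image of a $\sigma$-compact set under $f|_{K_i}$), so $f(B_{n,k}^{(i)})=f(C_{n,k,i})\cup\{y_0\}$ is Borel, whence $\phi_n^{(i)}$ is Borel measurable. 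Taking the pointwise supremum then yields measurability of $N(\cdot,f,A\cap K_i)$, and combining with the reduction above concludes the proof. All redefinitions of $f$ have been performed on the single $\mu_X$-null set $N_0\cup\bigcup_{i,n,k}M_{n,k,i}$, as the statement requires.
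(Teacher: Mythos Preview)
Your overall architecture matches the paper's: a Lusin decomposition into sets on which $f$ is continuous (the paper's Lemma~\ref{lemma:lemma2}), followed by Banach's dissection argument using a dyadic-type partition (the paper's Lemma~\ref{lemma:lemma1}), and the observation that $N(y,f,A\cap T_k)$ increases to the desired indicatrix. The paper, however, does \emph{not} perform your second null-set modification. For the measurability of $f(B)$ when $B\subset T_k$ is Borel and $f|_{T_k}$ is continuous, it simply invokes \cite[2.2.13]{F}: the continuous image of a Borel subset of a complete separable metric space is a Suslin (analytic) set, and Suslin sets are $\mu_Y$-measurable for any Borel regular measure $\mu_Y$.

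Your attempt to bypass this fact via a second modification has a genuine gap. After you redefine $f\equiv y_0$ on $M:=\bigcup_{i,n,k}M_{n,k,i}$, the restriction $f|_{K_i}$ is \emph{no longer continuous}, because the null sets $M_{n',k',i'}$ coming from other levels $n'\neq n$ can meet $C_{n,k,i}$. Concretely, for the modified map one has
\[
f_{\mathrm{mod}}(B_{n,k}^{(i)})=f_{\mathrm{orig}}\bigl(C_{n,k,i}\setminus M\bigr)\cup\{y_0\},
\]
and $C_{n,k,i}\setminus M$ is merely Borel, not $\sigma$-compact; so you are back to the continuous image of a Borel set, i.e.\ an analytic set, which is exactly what you were trying to avoid. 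The parenthetical ``the continuous image of a $\sigma$-compact set under $f|_{K_i}$'' is therefore not justified for the modified $f$, and if you instead mean the original $f$ then $f_{\mathrm{orig}}(C_{n,k,i})$ need not equal $f_{\mathrm{mod}}(C_{n,k,i})$. The simplest repair is to drop the second modification entirely and cite the universal measurability of analytic sets, as the paper does; alternatively one must organize the $\sigma$-compact approximations so that they are mutually compatible across all levels $n$, which is considerably more delicate than what you wrote.
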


\begin{exa}
Let $C\subset\mathbb R$ denotes the Cantor set and $V\subset\mathbb R$ denotes the Vitaly non-measurable set.
There is a bijection $f:C\to V$.
Define the function
$$
\tilde{f}(x) = \begin{cases} f(x), &\text{ if } x\in C,\\
0, &\text{ if } x \not\in C,
\end{cases}
$$
which is measurable. 
But at the same time the multiplicity function $N(y,\tilde{f},A)$
can not be measurable as it coincides with characteristic function of the non-measurable set $V$ 
on $\mathbb R\setminus\{0\}$.
\end{exa}

\noindent
\textit{\text{Dyadic system.}} 
We involve a system of dyadic cubes.
Namely a family 
$$
\{Q^k_{\alpha} \mid k\in\mathbb Z, \alpha\in\mathcal A_k\subset\mathbb N \}
$$ 
of Borel sets with parameters 
$\delta\in(0,1)$, $0<c\leq C<\infty$ and centres $\{x_\alpha^k\}$, meeting the following properties:\\
1) If $l\geq k$ then either $Q^l_\beta\subset Q^k_\alpha$ or $Q^l_\beta\cap Q^k_\alpha = \emptyset$;\\
2) For each $k\in\mathbb Z$ $X = \bigcup\limits_{\alpha\in\mathcal A_k}Q^k_\alpha$ is a disjoint union;\\
3) $B(x^k_\alpha,c\delta^k)\subset Q^k_\alpha\subset B(x^k_\alpha,C\delta^k)$;\\
4) If $l\geq k$ and $Q^l_\beta\subset Q^k_\alpha$ then $B(x^l_\beta,C\delta^l)\subset B(x^k_\alpha,C\delta^k)$.\\
This specific dyadic system in doubling quasi-metric spaces was constructed in \cite{HK}
and generalize the dyadic cubes in the Euclidean space. 

\section{Measurability establishing.} 
Before proceeding with the Theorem~\ref{theorem:indicatrix_measurability}
we need following tow lemmas.  

\begin{lem}\label{lemma:lemma1}
Let $A\subset X$ is a Borel set and $f:X\to Y$ is a $\mu_X$-measurable mapping possessing 
the following property: $f(B)$ is $\mu_Y$-measurable whenever $B\subset A$ is a Borel set.
Then $N(y,f,A)$ is a $\mu_Y$-measurable function.
\end{lem}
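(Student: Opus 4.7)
\medskip

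The plan is to approximate $N(y,f,A)$ from below by a sequence of measurable functions built from the dyadic decomposition. For each level $k\in\mathbb Z$, set
$$
N_k(y) := \sum_{\alpha\in\mathcal A_k}\chi_{f(A\cap Q^k_\alpha)}(y).
$$
Since each $A\cap Q^k_\alpha$ is a Borel subset of $A$, the hypothesis on $f$ gives that $f(A\cap Q^k_\alpha)$ is $\mu_Y$-measurable, so each summand $\chi_{f(A\cap Q^k_\alpha)}$ is measurable. The family $\mathcal A_k$ is countable, so $N_k$ is $\mu_Y$-measurable as a countable sum of non-negative measurable functions.

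Next, I would verify that $N_k(y)$ is monotone non-decreasing in $k$ and converges to $N(y,f,A)$ pointwise. Monotonicity follows from property 1) of the dyadic system: each cube $Q^k_\alpha$ is the disjoint union of its descendants at level $k+1$, so if $y\in f(A\cap Q^k_\alpha)$ then $y\in f(A\cap Q^{k+1}_\beta)$ for at least one child $Q^{k+1}_\beta\subset Q^k_\alpha$, whence $N_{k+1}(y)\geq N_k(y)$. For the upper bound $N_k(y)\leq N(y,f,A)$, observe that each preimage $x\in A\cap f^{-1}(y)$ lies in exactly one $Q^k_\alpha$ (by properties 1)--2)), so $N_k(y)$ counts at most the number of distinct preimages. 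For the lower bound, fix any finite collection $x_1,\dots,x_j$ of distinct points in $A\cap f^{-1}(y)$; because cube diameters satisfy $\operatorname{diam}(Q^k_\alpha)\leq 2C\delta^k\to 0$, for all sufficiently large $k$ the points $x_1,\dots,x_j$ lie in $j$ distinct cubes, forcing $N_k(y)\geq j$. Letting $j\to N(y,f,A)$ yields $\lim_k N_k(y)=N(y,f,A)$.

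Finally, since $\{N_k\}$ is a monotone sequence of $\mu_Y$-measurable functions converging pointwise to $N(\cdot,f,A)$, the limit function $N(y,f,A)$ is $\mu_Y$-measurable.

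I expect no serious obstacle: the hypothesis on $f$ directly supplies measurability at each finite scale, and the doubling assumption enters only through the existence of the dyadic system with shrinking diameters. The one point requiring care is making sure that distinct preimages are eventually separated by the partition, which is immediate from property 3) once $\delta^k$ is smaller than the pairwise distances among any chosen finite subset of $f^{-1}(y)\cap A$.
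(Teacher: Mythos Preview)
Your proposal is correct and follows essentially the same argument as the paper: both define $N_k(y)=\sum_{\alpha\in\mathcal A_k}\chi_{f(A\cap Q^k_\alpha)}(y)$, note its measurability via the hypothesis, establish monotonicity in $k$, and identify the pointwise limit with $N(y,f,A)$ by the separation argument using shrinking cube diameters. The only difference is that you spell out the monotonicity step explicitly via the nesting property, whereas the paper simply asserts it.
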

\begin{proof}
Take a system $\{Q^k_\alpha\}$ of dyadic cubes on X,
and define a family of functions
$$
L^k_\alpha(y) = \chi_{f(Q^k_\alpha\cap A)}(y).
$$
Functions $L^k_\alpha(y)$ are non-negative and $\mu_Y$-measurable
(as characteristic functions of $\mu_Y$-measurable sets $f(Q^k_\alpha\cap A)$).
Therefore the sum
$$
N_k(y) = \sum_{\alpha\in\mathcal A_k}L^k_\alpha(y)
$$
is also measurable. Thus the sequence of measurable functions $\{N_k(y)\}$
is non-decreasing and the pointwise limit 
$$
N^*(y) = \lim\limits_{k\to\infty}N_k(y)
$$ 
exists and is a $\mu_Y$-measurable function.

Note that $N_k(y)$ counts on how many of the sets $Q^k_\alpha\cap A$ the function $f$
attains the value $y$ at least once.
So for each $k$ $N(y,f,A)\geq N_k(y)$ and $$N(y,f,A)\geq N^*(y).$$

Prove the reverse inequality.
Let $q$ be an integer such that $N(y,f,A)\geq q$.
Then there exist $q$ different points $x_1,\dots,x_q\subset A$ such that $f(x_j) = y$.
If $k$ is large enough so that points $x_1,\dots,x_q$ are in separated cubes $\{Q^k_{\alpha_j}\}$,$j=1,\dots,q$,
then $N_k(y)\geq q$.
This shows $N^*(y)\geq N(y,f,A)$ and
$$
N^*(y) = N(y,f,A).
$$   
\end{proof}

\begin{lem}\label{lemma:lemma2}
Let $f:X\to Y$ be a $\mu_X$-measurable mapping.
Then there is an increasing sequence of closed sets $\{T_k\}\subset X$ such that 
$f$ is continuous on every $T_k$ and $\mu_X\bigl(X\setminus\bigcup\limits_kT_k\bigr)=0$.
\end{lem}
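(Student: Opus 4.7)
The plan is to reduce the statement to a Lusin-type theorem for real-valued measurable functions, apply it simultaneously to a countable separating family of such functions, and assemble the resulting closed sets. First, I would exploit $\sigma$-finiteness by fixing $x_0\in X$ and setting $B_n=B(x_0,n)$, so that $X=\bigcup_n B_n$ with $\mu_X(B_n)<\infty$. Since $(X,d_X)$ is complete and separable and $\mu_X$ is Borel regular, each finite-measure Borel set is inner regular by compact sets via a Ulam-style tightness argument. Using the separability of $Y$, I would then pick a countable dense set $\{y_j\}_{j\in\mathbb N}\subset Y$ and form the real-valued $\mu_X$-measurable functions $g_j(x)=d_Y(f(x),y_j)$.

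Second, I would invoke the classical Lusin theorem for real-valued measurable functions on a space with a Radon measure: for each triple $(n,k,j)$ there is a compact set $K_{n,k,j}\subset B_n$ with $\mu_X(B_n\setminus K_{n,k,j})<2^{-(n+j+k)}$ on which $g_j$ is continuous. Setting $K_{n,k}=\bigcap_j K_{n,k,j}$ yields a compact subset of $B_n$ of complementary measure at most $2^{-(n+k-1)}$ on which every $g_j$ is continuous simultaneously.

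Third, I would upgrade simultaneous continuity of the $g_j$ on $K_{n,k}$ to continuity of $f$ itself there: given $x_m\to x$ in $K_{n,k}$ and $\varepsilon>0$, choose $y_j$ with $d_Y(f(x),y_j)<\varepsilon$; then $d_Y(f(x_m),y_j)=g_j(x_m)\to g_j(x)<\varepsilon$, so the triangle inequality gives $d_Y(f(x_m),f(x))<3\varepsilon$ for large $m$. Finally, I would set $T_k=\bigcup_{n\leq k}K_{n,k}$, a finite union of compact sets (hence closed), on which $f$ is continuous by the pasting lemma for closed sets; replacing $T_k$ with $\bigcup_{j\leq k}T_j$ keeps the union closed and $f$ continuous while making the sequence increasing. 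For $k\geq n$ one has $\mu_X(B_n\setminus T_k)<2^{-(n+k-1)}$, so $\mu_X(B_n\setminus\bigcup_k T_k)=0$ and therefore $\mu_X(X\setminus\bigcup_k T_k)=0$.

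The main obstacle is the tightness step, i.e.\ justifying inner regularity of $\mu_X$ by compact sets on sets of finite measure; this is where completeness and separability of $X$ enter in an essential way. Once tightness is in hand the rest is routine: the scalar Lusin theorem, the observation that continuity of the coordinate functions $g_j$ detects continuity of $f$ into $Y$ thanks to the density of $\{y_j\}$, and the pasting lemma for finite unions of closed sets.
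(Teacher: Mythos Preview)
Your argument is correct and reaches the same conclusion, but the route differs from the paper's in one essential respect: the paper applies Lusin's theorem (Federer~2.3.5) \emph{directly} to the metric-space-valued map $f:X\to Y$, whereas you first reduce to the scalar case via the family $g_j(x)=d_Y(f(x),y_j)$ and then recover continuity of $f$ from simultaneous continuity of the $g_j$ by a density argument. The paper also decomposes $X$ using a single generation of dyadic cubes $\{Q_\alpha\}$ rather than concentric balls, and within each cube it \emph{iterates} Lusin on the successive remainders $Q_\alpha\setminus C^1_\alpha$, $Q_\alpha\setminus(C^1_\alpha\cup C^2_\alpha),\dots$ to exhaust almost all of $Q_\alpha$, finally taking the diagonal finite unions $T_j=\bigcup_{\alpha\le j}\bigcup_{i\le j}C^i_\alpha$. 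Your approach is more self-contained---only the classical real-valued Lusin theorem is invoked, the tightness/inner-regularity step is isolated explicitly, and the doubling hypothesis is never used in this lemma---while the paper's version is shorter because the metric-valued Lusin theorem it cites absorbs both the tightness issue and the passage from scalar to $Y$-valued continuity into a single black box.
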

\begin{proof}
Let $\{Q_\alpha\}$ be a collection of dyadic cubes of one generation and 
 $$X = \bigcup\limits_{\alpha=1}^{\infty} Q_\alpha \quad \text{ -- disjoint union.}$$ 

By Luzin's theorem \cite[2.3.5]{F} there is a closed set $C^1_\alpha\subset Q_\alpha$
such that $f$ is continuous on $C^1_\alpha$ and $\mu_X(Q_\alpha\setminus C^1_\alpha)<1$.
Similarly $f$ continuous on $C^2_\alpha\subset Q_\alpha\setminus C^1_\alpha$ and $\mu_X((Q_\alpha\setminus C^1_\alpha)\setminus C^2_\alpha)<1/2$
and so on.
This yields a sequence $\{C^j_\alpha\}$ of closed sets.

Put $$P^k_\alpha = \bigcup\limits_{i=1}^k C^i_\alpha,$$ then $P^k_\alpha\subset P^{k+1}_\alpha$
and the mapping $f$ is continuous on each $P^k_\alpha$. 
Furthermore $\mu_X(Q_\alpha\setminus P^k_\alpha)<1/k$ and hence 
$\mu_X(Q_\alpha\setminus \bigcup\limits_{k} P^k_\alpha)=0$. 

Now defining $$T_j = \bigcup\limits_{\alpha=1}^jP^j_\alpha,$$ we get an increasing sequence of closed sets.
In particular, $\mu_X(Q_\alpha\setminus \bigcup\limits_{j}T_j)=0$ 
since $\bigcup\limits_{j}P^j_\alpha\subset \bigcup\limits_{j}T_j$.
Then 
$$X\setminus \bigcup\limits_{j=1}^\infty T_j = \bigcup\limits_{\alpha=1}^{\infty}(Q_\alpha\setminus \bigcup\limits_{j=1}^{\infty}T_j).$$
Consequently the set $X\setminus \bigcup\limits_{j}T_j$ is of $\mu_X$-measure zero
as a countable union of negligible sets.
\end{proof}

\begin{proof}[Proof of Theorem \ref{theorem:indicatrix_measurability}]
Let $\{T_k\}$ be a sequence of closed sets from Lemma~\ref{lemma:lemma2}.
Observe that an image of each Borel set $B\subset T_k$ is $\mu_Y$-measurable 
since $f$ is continuous on $T_k$ \cite[2.2.13]{F}.
This puts us in a position to apply Lemma~\ref{lemma:lemma1} to deduce that $N(y,f,A\cap T_k)$ is a $\mu_Y$-measurable function.
The sequence $N(y,f,A\cap T_k)$ is non-decreasing and hence
$$N\biggl(y,f,A\cap A\cap \bigcup\limits_k T_k\biggr) = \lim\limits_{k\to\infty}N(y,f,A\cap T_k)$$
is a $\mu_Y$-measurable function.

Take a point $y_0\in Y$ and redefine 
$
f(x) = y_0 \text{ for } x\in X\setminus \bigcup\limits_k T_k.
$  
\end{proof}

\begin{rem}
Note that Theorem \ref{theorem:indicatrix_measurability} requires that the set $A$ be a Borel set.
On the other hand one can prove an analogous assertion for measurable set $A$ however assuming that mapping $f$
satisfies the Luzin $\mathcal N$-property (because in this case the continuous image of every measurable set is measurable and Lemma \ref{lemma:lemma1} is appliable). 
\end{rem} 



\end{document}